\newtheorem{thm}{Theorem}[section]
\newtheorem{cor}[thm]{Corollary}
\theoremstyle{definition}
\newtheorem{rem}[thm]{Remark}
\numberwithin{equation}{section}
\newcommand{\PP}{\mathbb{P}}
\newcommand{\dif}{\,\mathrm{d}}
\newcommand{\cP}{\mathcal{P}}
\newcommand{\rd}{\,\mathrm{d}}
\newcommand{\charfun}{\ensuremath{\mathbbm 1}}
\begin{document}
\title{Extremal Distributions of Discrepancy functions}
\author[R. Kritzinger]{Ralph Kritzinger}
\address{Institute of Financial Mathematics and Applied Number Theory, Johannes Kepler University Linz, Austria, 4040 Linz, Altenberger Strasse 69}
\email{ralph.kritzinger@jku.at}
\author[M. Passenbrunner]{Markus Passenbrunner}
\address{Institute of Analysis, Johannes Kepler University Linz, Austria, 4040 Linz, Alten\-berger Strasse 69}
\email{markus.passenbrunner@jku.at}
\keywords{discrepancy function, convolution inequalities}
\subjclass[2010]{11K06, 11K38, 42A85, 46E30}

\begin{abstract}
The irregularities of a distribution of $N$ points in the unit interval are often measured with various notions of discrepancy.
The discrepancy function can be defined with respect to intervals of the form $[0,t)\subset [0,1)$ or arbitrary subintervals of the unit interval.
	In the former case, it is a well known fact in discrepancy theory that the $N$-element point set in the unit interval with the lowest $L_2$ or
	$L_{\infty}$ norm of the discrepancy function is the centered regular
	grid
	$$ \Gamma_N:=\left\{\frac{2n+1}{2N}: n=0,1,\dots,N-1\right\}. $$
	We show a stronger result on the distribution of discrepancy functions of point sets in $[0,1]$, which
	basically says that the distribution of the discrepancy function of $\Gamma_N$ is in some sense minimal among all $N$-element
	point sets. As a consequence, we can extend the above result to
	rearrangement-invariant norms,
	including $L_p$, Orlicz and Lorentz norms. 

	We study the same problem for the discrepancy notions with respect to arbitrary subintervals. In this case, we will observe that
	we have to deal with integrals of convolutions of functions. To this end, we prove a general upper bound on such expressions, which
	might be of independent interest as well.
\end{abstract}

\maketitle

\section{Introduction}	
Let $\mathcal P=\{x_0,\ldots, x_{N-1}\}$ be an $N$-element point set in
	the unit interval $[0,1]$ which is always assumed to be arranged
	increasingly.
	Denote by $D=D_{\mathcal P}$ its one-parameter discrepancy function
	\begin{equation}\label{eq:defD}
	D(t) = \sum_{n=0}^{N-1} \charfun_{[0,t)}(x_n) - Nt,
	\end{equation}
	where $\charfun_{A}$ denotes the indicator function of the set $A$.
	We also consider a two-parameter discrepancy function $\widetilde{D} =
	\widetilde{D}_{\mathcal P}$, defined by
\begin{equation}\label{eq:defDtilde}
	\widetilde{D}(t_1,t_2) =\sum_{n=0}^{N-1}
\charfun_{[t_1,t_2)}(x_n)-N(t_2-t_1),\qquad 0\leq t_1\leq t_2\leq 1.
\end{equation}
Therefore, the discrepancy functions measure the deviation of the actual number of points in a subinterval of $[0,1]$ (the so-called test sets) and the expected number of points under
the assumption of uniform distribution. This deviation is measured with respect to intervals anchored in the origin in the case of $D_{\cP}$ and with respect to
arbitrary subintervals of $[0,1]$ in the case of $\widetilde{D}_{\mathcal{P}}$.

  One usually considers a norm of the discrepancy function as a quantitative measure of the irregularities of distribution
	of a point set. The best-studied cases are those of the $L_p$ norms for $1\leq p \leq \infty$, where we speak of $L_p$ discrepancy
	for finite $p$ and of star discrepancy for $p=\infty$ in case of the
	one-parameter discrepancy function. Note that for a measurable function $f: A\to \mathbb{R}$ defined on a domain $A\subseteq
	\mathbb{R}^d$ with $|A|>0$, where $|\cdot|$ denotes the $d$-dimensional Lebesgue measure, we define the $L_p$ (quasi-) norm of $f$ for $p\in(0,\infty)$ by
	$$ \|f\|_p:=\left(\int_A |f(t)|^p \dif t\right)^{1/p} $$
and the $L_\infty$ norm by
$$ \|f\|_{\infty}:= \inf \{ \lambda \geq 0 : |f|\leq \lambda \text{ a.e.}\}. $$
If we take the same norms of the two-parameter discrepancy function, one usually speaks of extreme $L_p$ and star discrepancy, respectively. Consult e.g.~\cite{Doerr2014} for an overview on these notions and~\cite{Matousek1999} for an excellent introduction to discrepancy theory. The smaller these discrepancy notions of a point set $\mathcal{P}$, the more uniformly
	it is distributed in the unit interval (see
	e.g.~\cite{KuipersNiederreiter1974}). The determination of those
	$N$-element point sets that have minimal discrepancy is a very difficult and largely 
	unsolved problem in dimensions higher than one. We refer to~\cite{White1977} for the $N$-element
	point sets in $[0,1]^2$ with minimal star discrepancy for $N=1,\dots,6$
	and to~\cite{Pillards2006} and \cite{LarcherPillichshammer2007} for the one
	and two-element point sets in the $d$-dimensional unit cube $[0,1]^d$
	with minimal $L_2$, star and extreme star discrepancy, respectively.
	Moreover, a systematic search for the $N$-element point sets with
	minimal $L_2$ discrepancy, measured with respect to periodic boxes, up to $N=16$,
	was performed in \cite{HinrichsOettershagen2016}.
	However, for point sets in the one-dimensional unit interval $[0,1]$ the answer is known for the star and $L_2$ discrepancy and every natural number $N$.
	By Niederreiter (see \cite[Corollary 1.1]{Niederreiter1973} and
	\cite[Theorem 2.6]{Niederreiter1992}) we have for the $L_2$ and star discrepancy of a set of points $\mathcal{P}=\{x_0,\dots,x_{N-1}\}$ the explicit formulas
	\[ \|D_{\mathcal{P}}\|_2=N\sum_{n=0}^{N-1}\left(x_n-\frac{2n+1}{2N}\right)^2+\frac{1}{12} \] and
	\[ \|D_{\mathcal{P}}\|_{\infty}=N\max_{0\leq n\leq N-1}\left|x_n-\frac{2n+1}{2N}\right|+\frac{1}{2}, \] 
	respectively. As an immediate consequence we find that the centered regular grid
\[
	\Gamma_N := \Big\{\frac{2n+1}{2N} : n=0,\ldots, N-1\Big\}.
\]
is the unique minimizer of both the $L_2$ and star discrepancy among all $N$-element points in $[0,1]$. 

The situation is similar for the extreme discrepancy notions. Niederreiter
\cite[Theorem 2.7]{Niederreiter1992} was able to show the explicit formula
$$   \|\widetilde{D}_{\mathcal{P}}\|_{\infty}=1+N\max_{0\leq n\leq N-1} \left(\frac{n}{N}-x_n\right)-N\min_{0\leq n\leq N-1} \left(\frac{n}{N}-x_n\right)$$
for $\mathcal P=\{x_0,\ldots x_{N-1}\}\subset [0,1]$.
Furthermore, it is not hard to prove the following formula for its extreme $L_2$ discrepancy.
 By a straight-forward computation of the integrals in its definition and some elementary algebra we find
 \begin{equation}\label{eq:Dtilde2}
	   \|\widetilde{D}_{\mathcal{P}}\|_{2}^2
			 = \frac{1}{12}+\frac12 \sum_{n,m=0}^{N-1}\left(x_n-x_m-\frac{n-m}{N}\right)^2.
	\end{equation}
 Therefore, the only minimizing point sets with $N$ elements of the extreme star and $L_2$ discrepancy are translated regular grids of the form
\begin{equation} \label{transl}
	\Gamma_N^{\delta} = \Big\{\frac{n}{N}+\delta : n=0,\ldots, N-1\Big\}
	\text{\, for some $\delta\in\Big[0,\frac{1}{N}\Big)$}.
\end{equation}
Observe that with this notation, we have $\Gamma_N^{1/(2N)}
= \Gamma_N$.

The question arises whether these statements
remain true if we take other norms of the one- and two-parameter discrepancy function of $\mathcal{P}$. To this end, we will show results on the distribution
of the discrepancy functions, which is motivated by the fact that the $L_p$ norm
and various other norms of the discrepancy functions are determined by the
distribution of their absolute values. In general, by distribution  we mean the
following: Let $f: A\to \mathbb{R}$ be a measurable function on a domain with $A\subseteq
\mathbb{R}^d$ and $|A|>0$. 
Then we define $\PP_A(f<\alpha):=|\{t\in A: |f(t)| <
\alpha\}|/|A|$. Here, we denote by $|\cdot|$ the Lebesgue measure on $\mathbb R^d$. We usually suppress the lower index in $\PP_A$. Hence, we ask for results on the distributions $\PP(|D_{\cP}|<\alpha)$ and $\PP(|\widetilde{D}_{\cP}|<\alpha)$ of the one- and two-parameter discrepancy function, respectively. To be more precise, we will show that for any natural number $N$ and any point sequence $\mathcal{P}$ with $N$ elements, we have
	\[
		\mathbb P(|D_{\mathcal{P}}| < \alpha) \leq \mathbb
		P(|D_{\Gamma_N}|<\alpha) \text{\quad  and \quad} \mathbb P(|\widetilde{D}_{\mathcal{P}}| < \alpha) \leq \mathbb
		P(|\widetilde{D}_{\Gamma_N}|<\alpha), \qquad \alpha>0,
	\]
with equality for all $\alpha>0$ only if $\cP$ is the centered regular grid
$\Gamma_N$ in the first inequality or a translation thereof in the second. Normed function spaces where the norm of a function is determined by its distribution are called 
rearrangement-invariant and include Orlicz and Lorentz spaces, for instance, which both generalize the $L_p$ spaces.
As a general reference to those notions, we use \cite{BennettSharpley1988}.
We will use our results on the distribution functions of $D$ and $\widetilde{D}$
to identify the centered regular grid $\Gamma_N$ and its translations as the 
(only) minimizers of each such norm of $D$ and $\widetilde{D}$, respectively. 

\section{The distribution of the one-parameter discrepancy function}\label{sec:one_parameter}
The function $D$ and its extremal distribution are easily analyzed.
To this end, we investigate the distribution function
  $ \PP(D\leq \alpha):=|\{t\in [0,1]: D(t)\leq \alpha\}|$
	of $D=D_{\cP}$. 
	We observe that if
$\ell$ is a linear function on an interval $I\subset \mathbb R$ of finite length
that has a slope of $k\neq 0$, we get
\[
	|\{ x\in I: \ell(x)\in (a,b) \}| = \int_a^b
	\frac{\charfun_{\ell(I)}(t)}{|k|}  \dif t,\qquad a<b.
\]
Let now $g$ denote the density of $D$; i.e. $g\geq 0$ with $\int g=1$  is such that
\[
	\mathbb P(D \in (a,b)) =\int_{a}^{b} g(t)\rd t,\qquad a<b.
\]
Observe that $D$ consists of $N+1$ linear pieces with slope $-N$ on the
intervals $(x_{n-1},x_n)$  for $n=0,\ldots,N$ with $x_{-1}=0, x_N=1$.
Therefore, by the above argument and setting $I_n = (n-Nx_n, n-Nx_{n-1})$ for
$n=0,\ldots,N$, 
\begin{equation}\label{eq:form_of_g}
	g = \frac{1}{N} \sum_{n=0}^{N} \charfun_{I_n}.
\end{equation}
Since $I_0\cap I_{N}=\emptyset$, 
the function $g$ is piecewise constant with $g\in \{ j/N : 0\leq j \leq N \}$.
For instance, the density $g$ corresponding to the translated grid
$\Gamma_N^{\delta}$---as defined
in~\eqref{transl}---is given by $\charfun_{[-N\delta,1-N\delta)}$. 

	It is easily seen that the properties $0\leq g\leq 1$ and $\int g=1$
	imply that for intervals $I$ symmetric around $0$,  we have the inequality
	\begin{equation}\label{eq:charest}
		\int_I g(t)\dif t \leq \int_I M_1(t)\dif t
	\end{equation}
	with $M_1 = \charfun_{(-1/2,1/2)}$.
	In \eqref{eq:charest}, equality  for all symmetric intervals $I$ around
	$0$ holds exactly if $g=M_1$ a.e.
	This immediately implies the following result:

\begin{thm} \label{theo1}
	For any natural number $N$ and any point sequence $\mathcal{P}$ with $N$ elements, we have
	\[
		\mathbb P(|D_{\mathcal{P}}| < \alpha) \leq \mathbb
		P(|D_{\Gamma_N}|<\alpha), \qquad \alpha>0,
	\]
	and equality for all $\alpha>0$ holds if and only if $\mathcal{P}=\Gamma_N$.
\end{thm}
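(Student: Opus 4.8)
The statement follows almost immediately from the analysis already carried out before the theorem, so the plan is to assemble those pieces cleanly. The key observation is that $\PP(|D|<\alpha)$ is exactly $\int_{(-\alpha,\alpha)} g(t)\dif t$, where $g$ is the density of $D$ described in \eqref{eq:form_of_g}. First I would record that $g$ satisfies $0\le g\le 1$ and $\int_{\bR} g = 1$: the bound $g\le 1$ comes from the fact that $g\in\{j/N: 0\le j\le N\}$ (using $I_0\cap I_N=\emptyset$, as noted), and $\int g=1$ since $g$ is the density of a probability distribution (equivalently, $\sum_{n=0}^N |I_n| = \sum_{n=0}^N (Nx_n - Nx_{n-1})/N \cdot(\text{sign bookkeeping})$, which telescopes to $1$). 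Then for the symmetric interval $I=(-\alpha,\alpha)$, inequality \eqref{eq:charest} gives
\[
	\PP(|D_{\mathcal P}|<\alpha) = \int_{(-\alpha,\alpha)} g(t)\dif t \le \int_{(-\alpha,\alpha)} M_1(t)\dif t,
\]
where $M_1 = \charfun_{(-1/2,1/2)}$. Since the density of $D_{\Gamma_N}$ is precisely $\charfun_{[-N\delta,1-N\delta)}$ with $\delta = 1/(2N)$, i.e.\ $\charfun_{(-1/2,1/2)} = M_1$, the right-hand side equals $\PP(|D_{\Gamma_N}|<\alpha)$, which proves the inequality.

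For the equality case, suppose $\PP(|D_{\mathcal P}|<\alpha) = \PP(|D_{\Gamma_N}|<\alpha)$ for every $\alpha>0$. Then equality holds in \eqref{eq:charest} for the interval $I=(-\alpha,\alpha)$ for all $\alpha>0$, hence for all symmetric intervals around $0$; by the equality clause of \eqref{eq:charest} this forces $g = M_1$ a.e. It remains to deduce $\mathcal P = \Gamma_N$ from $g = \charfun_{(-1/2,1/2)}$ a.e. This is where I expect the only real (albeit minor) work to be: one has to invert the relation \eqref{eq:form_of_g}. Since each $I_n = (n-Nx_n,\,n-Nx_{n-1})$ has length $N(x_n - x_{n-1})\le 1$ and the $I_n$ tile (with multiplicity, but here multiplicity one a.e.) an interval of total length $1$, the only way $\sum_{n=0}^N \charfun_{I_n}$ can equal $\charfun_{(-1/2,1/2)}$ a.e.\ is for the intervals $I_0, I_1, \dots, I_N$ to be consecutive and abut each other, with $I_0$ starting at $-1/2$ and $I_N$ ending at $1/2$. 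Concretely $I_0 = (0, -Nx_0)$... — more carefully, tracking endpoints, $I_n$ ends at $n - Nx_{n-1}$ and $I_{n+1}$ begins at $n+1 - Nx_n$, and these coincide for every $n$ iff $x_n = \frac{n}{N} + \delta$ for a common $\delta$; the left endpoint of $I_0$ being $-1/2$ then pins $\delta = 1/(2N)$, giving $\mathcal P = \Gamma_N$. Conversely $\mathcal P = \Gamma_N$ obviously gives equality, since then $g = M_1$ exactly.

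The main obstacle, such as it is, is purely bookkeeping in the equality case: one must argue that the $I_n$ cannot overlap nontrivially without creating a point where $g\ge 2/N$ or leaving a gap where $g$ drops below the value forced by $\int g = 1$ on a set of measure one — i.e.\ that a step function built from $N+1$ intervals of lengths summing to $1$ can equal $\charfun_{(-1/2,1/2)}$ a.e.\ only in the "aligned" configuration. This can be made rigorous by considering the points $-N x_n$ (equivalently the breakpoints $n - N x_n$) and noting that any misalignment produces either a subinterval where two of the $\charfun_{I_n}$ overlap — impossible since $g\le 1$ — or a subinterval inside $(-1/2,1/2)$ covered by none, which would make $\int g < 1$. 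I would present this as a short paragraph rather than a separate lemma.
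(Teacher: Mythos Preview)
Your approach is the same as the paper's: express $\PP(|D_{\mathcal P}|<\alpha)=\int_{-\alpha}^\alpha g$, apply \eqref{eq:charest}, and identify the right-hand side with $\PP(|D_{\Gamma_N}|<\alpha)$ via the fact that the density for $\Gamma_N$ is $M_1$. The paper dispatches the equality case in a single clause (``$g=M_1$ a.e., which is the case only if $\mathcal P=\Gamma_N$''), whereas you attempt to spell this implication out.

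Your sketch of that implication, however, is based on a wrong picture. You write that $\sum_{n=0}^N \charfun_{I_n}$ should equal $\charfun_{(-1/2,1/2)}$ and that the $I_n$ must therefore be ``consecutive and abut each other''. But $g=\frac{1}{N}\sum_{n=0}^N \charfun_{I_n}=M_1$ means $\sum_{n=0}^N \charfun_{I_n}=N\charfun_{(-1/2,1/2)}$: every point of $(-1/2,1/2)$ must be covered by exactly $N$ of the $N+1$ intervals, not by one. Since $I_0\cap I_N=\emptyset$, the maximal possible multiplicity is $N$, attained only where all of $I_1,\ldots,I_{N-1}$ and exactly one of $I_0,I_N$ are present. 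Hence the correct conclusion is $I_1=\cdots=I_{N-1}=(-1/2,1/2)$ together with $I_0\cup I_N=(-1/2,1/2)$; reading off the endpoints $n-Nx_n=-1/2$ for $0\le n\le N-1$ gives $x_n=(2n+1)/(2N)$, i.e.\ $\mathcal P=\Gamma_N$. (Your endpoint-matching condition ``$I_n$ ends at $n-Nx_{n-1}$ and $I_{n+1}$ begins at $n+1-Nx_n$'' also misidentifies the left endpoint of $I_{n+1}$, which is $n+1-Nx_{n+1}$.) Once this combinatorial picture is corrected, the argument goes through and is indeed a routine paragraph, as you anticipated.
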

\begin{proof}
With the density $g$ of $D_{\mathcal{P}}$ introduced above we can write for
$\alpha>0$
\[
	\mathbb P ( |D_{\mathcal{P}}| < \alpha ) = \mathbb P ( D_{\mathcal{P}} \in
	(-\alpha,\alpha)) = \int_{-\alpha}^\alpha g(t)\dif t \leq
	\int_{-\alpha}^\alpha M_1(t)\dif t = \mathbb
	P(|D_{\Gamma_N}|<\alpha)
\]
by \eqref{eq:charest} with equality for all $\alpha>0$ exactly for
$g=M_1$ a.e., which is the case only if $\mathcal P = \Gamma_N$.
\end{proof}

\section{Convolution inequalities} \label{prelim}

In this section, we show a general inequality involving integrals and the convolution
of functions with certain properties, which will lead to estimates for the
distribution function of $\widetilde{D}$. 
Before we state these results, we explain 
the relationship between $\widetilde{D}$ and convolutions of functions.
The function $\widetilde{D}$ in \eqref{eq:defDtilde} is only defined on the set $S=\{(t_1,t_2)\in
[0,1]^2 : t_1\leq t_2\}$ and on that set, it can be written as
$\widetilde{D}(t_1,t_2) = D(t_2) - D(t_1)$ with $D$ defined as in
	\eqref{eq:defD}. We extend $\widetilde{D}$ to the set
$[0,1]^2\setminus S$ by the same formula, which implies that $|\widetilde{D}|$
has the same distribution on the set $S$ as it has on $[0,1]^2$. In the
following, we always consider $\widetilde{D}$ to be defined on $[0,1]^2$.
 Denoting again by $g$ the probability density of $D$, the
 probability density of $-D$ is given by $\widetilde{g}$ with
 $\widetilde{g}(x):=g(-x)$ for $x\in\mathbb R$.
 Since the probability density of the sum of independent random variables is the convolution
 of their densities, we 
 obtain
 \begin{equation}\label{eq:PDtilde}
\mathbb P( |\widetilde{D}| < \alpha) = 
|\{(t_1,t_2)\in [0,1]^2: D(t_2) - D(t_1) \in
	(-\alpha,\alpha)\}| = \int_{-\alpha}^\alpha (g * \widetilde{g})(t)\dif
	t,
\end{equation}
 with the convolution $f*g$ given by $(f*g)(x) = \int f(x-y)g(y)\dif y$.
 Therefore, we have to work with integrals where the integrands involve convolutions of functions
with certain properties like those of $g$ stated above. In the rest of this section, we will
prove an upper bound on such integrals.

Let $f:\mathbb R\to \mathbb [0,\infty]$ be a non-negative function. We say that $f$ is \emph{symmetrically
decreasing} (or short: s.d.), if $f(-x)=f(x)$ for all $x\in \mathbb R$ and, for all $0<x<y$, we
have $f(y)\leq f(x)$. 
	It is easy to see that each s.d. function
	$f$ can be
approximated from below pointwise a.e. by an increasing sequence of simple functions of the form
$\sum_i c_i \charfun_{(-t_i,t_i)}$
for $t_i\in[0,\infty]$ and $c_i\geq 0$.
Characteristic functions of the form $\charfun_{I}$ for a symmetric interval $I$
around zero are
s.d. and the convolution of two such functions is given by 
\[
	\charfun_{I} * \charfun_{J} (x) = |I \cap (x+J)|,\qquad x\in\mathbb R,
\]
which is again s.d. 
Therefore, by pointwise approximation with simple functions
and the monotone convergence theorem, we conclude that the convolution $f*g$ of two
arbitrary s.d. functions $f,g$ is again s.d.
Now we show that the function $M_1=\charfun_{(-1/2,1/2)}$ is largest 
among all s.d. functions $g$ satisfying $0\leq g\leq 1$ and $\int g=1$ in the
following sense:

\begin{thm}\label{thm:main}
	Let $f,g,h$ be symmetrically decreasing
	functions on $\mathbb R$ with
	$0\leq g\leq 1$ and $\int g=1$. Then,
	\begin{equation}\label{eq:M1}
		\int h(x) (f *  g)(x)\dif x 
		\leq \int h(x) (f * M_1)(x)\dif x.
	\end{equation}

	Moreover, if $f$ is not constant a.e. and $f*g\in L_1$, equality for all $h$ here implies
	that $g=M_1$ (a.e).
 \end{thm}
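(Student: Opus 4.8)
The plan is to reduce the general case to the case where $f$ and $h$ are characteristic functions of symmetric intervals, and then to treat that case by a direct "rearrangement on the real line" argument. As remarked before the statement, every symmetrically decreasing function can be approximated pointwise a.e.\ from below by increasing sums $\sum_i c_i\charfun_{(-t_i,t_i)}$ with $c_i\geq 0$; writing such approximations for both $f$ and $h$ and using bilinearity together with the monotone convergence theorem, inequality~\eqref{eq:M1} follows once we know
\[
	\int \charfun_{(-r,r)}(x)\,(\charfun_{(-s,s)} * g)(x)\dif x
	\leq \int \charfun_{(-r,r)}(x)\,(\charfun_{(-s,s)} * M_1)(x)\dif x
\]
for all $r,s\in[0,\infty]$. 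Since $\charfun_{(-r,r)}$ is itself symmetric around $0$, the left-hand side is $\int_{-r}^r(\charfun_{(-s,s)}*g)$, and because $\charfun_{(-s,s)}*g$ is s.d.\ (hence unimodal, even, and integrable with total mass $2s$ when $s<\infty$), this is an integral of a unimodal even function over a symmetric interval. So the whole problem is: among s.d.\ functions $G$ with a prescribed mass and with $G=\charfun_{(-s,s)}*g$, the one coming from $g=M_1$ maximizes $\int_{-r}^r G$ for every $r$.

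The key computational step is an explicit description of $\charfun_{(-s,s)}*g$. Using $(\charfun_{(-s,s)}*g)(x)=\int_{x-s}^{x+s}g$, I would set $H(x):=\int_0^x g(t)\dif t$ (so $H$ is odd, nondecreasing, $1$-Lipschitz because $0\le g\le 1$, and $H(\pm\infty)=\pm\tfrac12$ since $\int g=1$), giving
\[
	(\charfun_{(-s,s)}*g)(x) = H(x+s)-H(x-s).
\]
For $g=M_1$ one has $H=H_0$ with $H_0(x)=\operatorname{sgn}(x)\min(|x|/... )$—more precisely $H_0(x)=x$ for $|x|\le\tfrac12$ and $H_0(x)=\tfrac12\operatorname{sgn}(x)$ otherwise, i.e.\ $H_0(x)=\max(-\tfrac12,\min(x,\tfrac12))$. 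The content of the theorem in this reduced form is that $H_0$ is the $1$-Lipschitz odd function with $H_0(+\infty)=\tfrac12$ whose induced symmetric-interval integrals of $H(\cdot+s)-H(\cdot-s)$ are maximal. I would prove the pointwise-integrated inequality
\[
	\int_{-r}^{r}\bigl(H(x+s)-H(x-s)\bigr)\dif x
	\le \int_{-r}^{r}\bigl(H_0(x+s)-H_0(x-s)\bigr)\dif x
\]
by rewriting the left side (after the substitution $u=x\pm s$ and using oddness) as a combination of the form $\int_0^{r+s}H-\int_0^{r-s}H$ (with obvious sign conventions when $r<s$), i.e.\ as $\int_{|r-s|}^{r+s}H(u)\dif u$ up to sign; the claim then reduces to $\int_a^b H\le\int_a^b H_0$ for $0\le a\le b$, which is immediate since $H_0$ dominates $H$ pointwise on $[0,\infty)$. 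That pointwise domination $H(x)\le H_0(x)$ for $x\ge0$ is itself forced by $H(0)=0$, $H$ being $1$-Lipschitz (so $H(x)\le x$), and $H\le\tfrac12$.

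Finally, for the equality characterization: if equality holds in~\eqref{eq:M1} for all s.d.\ $h$, then by choosing $h=\charfun_{(-r,r)}$ and letting $r$ range we get equality in $\int_{|r-s|}^{r+s}H=\int_{|r-s|}^{r+s}H_0$ for all admissible $r,s$, hence $H=H_0$ a.e.\ on the support of whatever test intervals we can reach—one must check here that, since $f$ is not a.e.\ constant, the convolutions $\charfun_{(-s,s)}*f$ (equivalently, after the reduction, the weights we are allowed to put on different $s$) are rich enough to force $H\equiv H_0$ on all of $\mathbb R$, whence $g=H'=M_1$ a.e. I expect this last density/richness argument to be the main obstacle: the reduction and the $1$-Lipschitz domination are routine, but making the equality case airtight requires care about exactly which symmetric intervals $f$ (being non-constant) contributes, and about the role of the hypothesis $f*g\in L_1$ in ruling out degenerate cases where both sides of~\eqref{eq:M1} are infinite.
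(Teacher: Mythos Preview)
Your argument for the inequality is correct and takes a somewhat different route from the paper. Both proofs reduce $h$ to $\charfun_{(-r,r)}$ and rewrite $\int_{-r}^{r} f*g = \int f\cdot v_r$ with $v_r=\charfun_{(-r,r)}*g$, so the task becomes $\int_{-s}^{s} v_r\le \int_{-s}^{s} v_{0,r}$ for all $s>0$, where $v_{0,r}=\charfun_{(-r,r)}*M_1$. The paper establishes this by recording that $v_r$ is s.d., bounded by $\min(2r,1)$, has integral $2r$, and is $1$-Lipschitz, and then runs a short case analysis in $s$. You instead pass to the odd primitive $H(x)=\int_0^x g$, observe $(\charfun_{(-s,s)}*g)(x)=H(x+s)-H(x-s)$, and compute $\int_{-r}^{r}(\charfun_{(-s,s)}*g)=2\int_{|r-s|}^{r+s}H$ (your ``up to sign'' is in fact a factor of $2$, always positive). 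Since $H(0)=0$, $H'\le 1$ and $H\le 1/2$ force the pointwise bound $H\le H_0=\min(\,\cdot\,,1/2)$ on $[0,\infty)$, the inequality is immediate. This is arguably cleaner than the paper's case distinction.

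The equality case is, as you yourself flag, the genuine gap. The paper does \emph{not} attempt a ``richness'' argument of the kind you outline; it argues by contraposition. Assuming $g\ne M_1$, it finds $x_0\in(0,1/2)$ with $g<1$ on $(x_0,1/2)$ and shows the strict pointwise inequality $v_\alpha<v_{0,\alpha}$ on the interval $I_\alpha=(\alpha-1/2,\alpha-x_0)$ and its reflection. Non-constancy of $f$ is then used to choose $\alpha$ so that $f$ places strictly positive mass on $I_\alpha\cup(-I_\alpha)$, via a decomposition $f=f_1+f_2$ with $f_2$ still s.d.\ and $f_1\ge 0$ supported there; this yields strict inequality for that single $h=\charfun_{(-\alpha,\alpha)}$. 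The hypothesis $f*g\in L_1$ is invoked only to guarantee both sides are finite so that the strict inequality is meaningful. Your approach can also be completed: layer-caking $f$ and using Fubini, equality for all $r$ gives $\int_{|r-s|}^{r+s}(H_0-H)=0$ for all $r>0$ and some fixed $s\in(0,\infty)$ arising as the half-width of a level set of $f$ (non-constancy of $f$ is precisely what furnishes such an $s$); differentiating in $r$ and using $H_0-H\ge 0$ on $[0,\infty)$ forces $H=H_0$ first on $(0,2s)$ and then, via the relation for $r>s$, on all of $[0,\infty)$. But this has to be written out, and the finiteness issue (the role of $f*g\in L_1$) made explicit.
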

 \begin{proof}
	We begin by proving \eqref{eq:M1}.
	Approximating the s.d. function $h$ by simple
	functions as described above and using the monotone convergence theorem, it suffices to
	consider $h=\charfun_{(-\alpha,\alpha)}$ with $\alpha>0$.
	 Observe that by Fubini's theorem
	 \[
		 \int_{-\alpha}^\alpha (f *  g)(t)\dif t =  \int
		f(z)
		\int_{-\alpha}^\alpha g(t-z)\dif t\dif z.
	 \]	
	 Let 
	 $v(z)=v_\alpha(z)= (\charfun_{(-\alpha,\alpha)}*g)(z)=\int_{-\alpha}^\alpha g(t-z) \dif t =
	 \int_{-\alpha-z}^{\alpha-z} g(t)\dif t $. Note that $v$, as the
	 convolution of two s.d. functions, is s.d.
	 We next show the following properties of $v$:
	 \begin{enumerate}
		 \item $0\leq v \leq \min(2\alpha,1)$,
		 \item $\int v = 2\alpha$,
		 \item $v$ is $1$-Lipschitz.
	 \end{enumerate}
	 Since $0\leq g\leq 1$ and $\int g=1$, we have  
	 $0\leq v\leq
	 \min(2\alpha,1)$, showing property (1).
	 For property (2), we just calculate  
	 \[
		 \int v(z)\dif z = \int_{-\alpha}^\alpha
		 \int g(t-z)\dif z \dif t = 2\alpha,
	 \]
	 where in the last equality, we used $\int g=1$.
	 Finally, property (3) is seen by the fact that
	 $v(z)=\int_{-\alpha-z}^{\alpha-z} g(t)\dif t$ implies that, for $\rho>0$, 
	 $v(z+\rho)-v(z)$ can be written as 
	 \[
		v(z+\rho) - v(z) = \int_I g(t)\dif t - \int_J g(t)\dif t
	 \]
	 for two intervals $I,J$ with $|I|\leq \rho$ and $|J|\leq \rho$. Therefore,
	 the assumption $0\leq g \leq 1$ implies $|v(z+\rho)-v(z)| \leq \rho$, which
	 is (3).

	 Next, define  $v_0= v_{0,\alpha}=
	\charfun_{(-\alpha,\alpha)} * M_1$. Then, $v_0$ is the s.d.
	function that equals $\min(2\alpha,1)$ on $[0, |\alpha - 1/2|]$, has
	slope $-1$ on $(|\alpha-1/2|,\alpha+1/2)$ and equals zero on
	$[\alpha+1/2,\infty)$.
	By the properties (1) and (2) of $v$ and the definition of $v_0$,
	for $t_0>0$ with $t_0<|\alpha-1/2|$ or $t_0>
	\alpha+1/2$, we clearly have 
	\begin{equation}\label{eq:t_0}
		\int_{-t_0}^{t_0} v(z)\dif z \leq \int_{-t_0}^{t_0} v_0(z)\dif
		z.
	\end{equation}
	If $|\alpha-1/2| \leq t_0\leq \alpha+1/2$, we distinguish the cases
	$v(t_0) \leq v_0(t_0)$ and $v(t_0) > v_0(t_0)$.
	In the first case, by (3), (1) for $v$ and the definition
	of $v_0$, we have \eqref{eq:t_0}. In the second case, by the same
	reasoning, $\int_{t_0}^\infty v(z)\dif z \geq \int_{t_0}^\infty
	v_0(z)\dif z$, which, using property (2) for $v$ and $v_0$ also yields
	\eqref{eq:t_0}. By approximating the s.d.
	function $f$ as above by an increasing sequence of simple functions
	and using the monotone convergence theorem,
	inequality \eqref{eq:t_0} implies
	\begin{equation}\label{eq:M1_vorstufe}
		\int_{-\alpha}^\alpha (f*g)(t)\dif t = \int f(z) v(z)\dif z \leq \int f(z) v_0(z)\dif z
		= \int_{-\alpha}^\alpha (f*M_1)(z)\dif z,
	\end{equation}
	which concludes the proof of inequality \eqref{eq:M1}.

	Now, we prove the equality part.
	We assume that $f$ is not constant a.e. and $g\neq M_1$.
	This implies that $a:= \operatorname{ess\,inf} f <
	\operatorname{ess\,sup} f =:b$ and the existence of
	$x_0\in (0,1/2)$ 
	so that $g(t)< 1$ for all $t\in
	(x_0,1/2)$. Set  $\delta = 1/2-x_0>0$.
	
	First, we show that for $\alpha\geq 0$ and $I_\alpha=(\alpha-1/2,
	\alpha-x_0)$, we have the inequality
	\begin{equation}\label{eq:v_v0}
		v_\alpha(z) <  v_{0,\alpha}(z),\qquad z\in I_\alpha\cup
		(-I_\alpha)
	\end{equation}
	Indeed, for $z\in I_\alpha$, we decompose the set
	$(-\alpha-z,\alpha-z)$ into the two intervals
 $J_1 :=
	(\min(-\alpha-z,x_0), x_0)$ and  $J_2 := (\max(x_0,-\alpha-z),\alpha-z) \subset
	(x_0,1/2)$, with $|J_2|>0$.
	By the properties of $g$ and $M_1$, the inequality $\int_{J_1} g(t)\dif
	t\leq \int_{J_1} M_1(t)\dif t$ is true. Moreover, for $t\in J_2$, we
	have $g(t) < 1 = M_1(t)$. The fact that $|J_2|>0$
	then implies 
	$\int_{J_2} g(t)\dif t < \int_{J_2} M_1(t)\dif t$. 
	Since $v(z)= \int_{J_1} g(t)\dif t + \int_{J_2} g(t)\dif t$ and $v_0(z)= 
	\int_{J_1} M_1(t)\dif t + \int_{J_2} M_1(t)\dif t$, adding up the
	above two inequalities yields \eqref{eq:v_v0} for $z\in I_\alpha$. Since the
	functions 
	$v$ and $v_0$ are both s.d., \eqref{eq:v_v0} also holds for $z\in -I_\alpha$.

	Next, we choose the parameters $s_1,s_2$ with $a<s_1 < s_2<b$, $\eta\in(0,\delta/2)$ and $t_0>0$ in such a
	way that 
	\begin{itemize}
		\item $t_0-\eta>0$,
		\item $f(t)\leq s_1$ if $t\geq t_0+\eta$,
		\item $f(t)\geq s_2$ if $0\leq t\leq t_0$.
	\end{itemize}
	This is possible because, due to the continuity of the Lebesgue
		measure, the measure of the set $\{ f\in (s_1,s_2) \}$ can be
	chosen arbitrarily small if $s_2-s_1$ is sufficiently small.
		
	Define $U= (t_0-\eta, t_0+\eta) \cup (-t_0-\eta,
	-t_0+\eta)$ and 
	decompose $f = f_1 + f_2$ with
	\[
		f_1 = \big( \min(f,s_2) - s_1 \big) \cdot \charfun_{\{f\geq
		s_1\}}\cdot \charfun_U, \qquad f_2 = f-f_1.
	\]
	Observe that $f_2$ is s.d. and $f_1\geq 0$. Since
	$|(t_0-\eta, t_0+\eta)| = 2\eta \leq \delta= |I_\alpha| $ for all $\alpha$, we can 
	choose $\alpha>0$
	so that $U\subseteq I_\alpha \cup (-I_\alpha)$. Doing so,
		\eqref{eq:M1_vorstufe} implies $\int f_2(z)v_\alpha(z)\dif z
		\leq \int f_2(z) v_{0,\alpha}(z)\dif z$ and 
		\eqref{eq:v_v0} implies $\int f_1(z) v_\alpha(z)\dif z < \int
		f_1(z) v_{0,\alpha}(z)\dif z$, where in the strict inequality,
		we also use the fact that $f_1= s_2-s_1>0$ on a subset of
		$U$ having positive Lebesgue measure.
	Adding up those inequalities, we conclude (using that $f*g\in L_1$)
	\[
		\int_{-\alpha}^\alpha (f*g)(t)\dif t = \int f(z) v_\alpha(z)\dif z < \int f(z) v_{0,\alpha}(z)\dif z
		=\int_{-\alpha}^\alpha (f*M_1)(t)\dif t
	\]
	for $\alpha>0$ chosen above, which finishes the proof of the
	equality part.
 \end{proof}

 We next extend the result in Theorem~\ref{thm:main} to more general
 functions and arbitrarily many convolution factors. To this end, 
we need an important classical inequality involving convolutions and
symmetric rearrangements, which we now describe.
Two functions $f,g:\mathbb R\to \mathbb
[0,\infty]$ are \emph{equimeasurable}, if the level sets $ \{ f \geq \lambda \}$
	and $\{ g\geq\lambda \}$ for $\lambda\geq 0$ have the same Lebesgue measure.
The \emph{symmetric decreasing rearrangement} of a
non-negative function $f$ is given by the s.d. function
$f^*$ that is equimeasurable with $f$. 
The symmetric decreasing rearrangement is uniquely
determined up to equality almost everywhere.
The following inequality is 
 known for $n=1$ as the Hardy-Littlewood inequality, for $n=2$ as the Riesz
inequality and for $n>2$ it is due to Luttinger and Friedberg
\cite{FriedbergLuttinger1976}
\begin{align}\label{eq:riesz}
	\int f(x)(g_1 * \cdots * g_n)(x)\dif x &\leq \int f^*(x) (g_1^* * \cdots
	* g_n^*)(x)\dif x,
\end{align}
where $f,g_1,\ldots,g_n$ are arbitrary non-negative functions. For an even more
general version of this inequality, we refer to
\cite{BrascampLiebLuttinger1974}.
Symmetrically decreasing
rearrangements of functions, as well as the above inequality for $n=1,2$ are
treated in the classical book Inequalities \cite{HardyLittlewoodPolya1952} by Hardy,
Littlewood and P\'{o}lya.
We now define iterative convolutions of the function $M_1
=\charfun_{(-1/2,1/2)}$ as follows:
\[
	M_n = M_{n-1} * M_1,\qquad n\geq 2.
\]
The function $M_n$ is known as the centered cardinal
B-spline of order $n$. 

Combining Theorem \ref{thm:main} with inequality \eqref{eq:riesz} yields 
the following convolution inequality which will be used
 later to estimate the measure of level sets of the 
 extreme discrepancy $\widetilde{D}$ and may be of independent interest:
 \begin{thm} \label{nconv}
	Let $g_1,\ldots,g_n$ be functions on $\mathbb R$ so that $0\leq g_j\leq
	1$ and $\int g_j=1$ for all $1\leq j\leq n$. Then, we have for all
	non-negative functions $h$
		\begin{equation}\label{eq:cor}
			\int h(x) (g_1 * \cdots * g_n)(x)\dif x \leq \int h^*(x)
			M_n(x)\dif x.
		\end{equation}

	If equality holds for all s.d. functions $h$, it follows that $g_1^* =
	\cdots =g_n^* = M_1$ a.e.
 \end{thm}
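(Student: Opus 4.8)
The plan is to derive \eqref{eq:cor} by combining the rearrangement inequality \eqref{eq:riesz} with an $n$-fold iteration of Theorem~\ref{thm:main}. First I would apply \eqref{eq:riesz} to the non-negative functions $h,g_1,\dots,g_n$. Since $g_j$ and $g_j^*$ are equimeasurable, the hypotheses $0\le g_j\le 1$ and $\int g_j=1$ are inherited by $g_j^*$, and $h^{**}=h^*$ is s.d.; hence it suffices to prove
\[
	\int h(x)(g_1*\cdots*g_n)(x)\dif x\le \int h(x)M_n(x)\dif x
\]
under the additional assumption that $g_1,\dots,g_n,h$ are all symmetrically decreasing. (For the equality part the same reduction applies, since taking $h$ itself symmetrically decreasing gives $h^*=h$ in \eqref{eq:riesz}.)

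Next I would argue by induction on $n$. For $n=1$ the bound $\int h\,g_1\le\int h\,M_1$ follows from \eqref{eq:charest}: approximate the s.d.\ function $h$ from below by simple functions $\sum_i c_i\charfun_{(-t_i,t_i)}$ and pass to the limit via monotone convergence. For the step $n-1\to n$, set $f:=g_2*\cdots*g_n$, which is symmetrically decreasing. Theorem~\ref{thm:main} (with this $f$, with $g=g_1$, and with test function $h$) yields $\int h\,(g_1*f)\le\int h\,(M_1*f)$, and moving the factor $M_1$ onto the test function gives
\[
	\int h\,(M_1*g_2*\cdots*g_n)=\int (M_1*h)\,(g_2*\cdots*g_n).
\]
Since $M_1*h$ is again symmetrically decreasing, the inductive hypothesis applied to $g_2,\dots,g_n$ with test function $M_1*h$ bounds the right-hand side by $\int(M_1*h)M_{n-1}=\int h\,M_n$, which closes the induction. (One could also peel a single factor off the convolution at each step using only the case $n=1$, via $\int h(g_1*\cdots*g_n)=\int(h*g_1*\cdots*g_{n-1})g_n\le\int(h*g_1*\cdots*g_{n-1})M_1$ and iteration; the version above is arranged so that the rigidity statement of Theorem~\ref{thm:main} becomes available.)

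For the equality statement, assume \eqref{eq:cor} holds with equality for every symmetrically decreasing $h$. By commutativity of convolution it suffices to show $g_1^*=M_1$, so suppose not. Reading the chain of inequalities above together with \eqref{eq:riesz}, the two ends $\int h(g_1*\cdots*g_n)$ and $\int h\,M_n$ coincide for all s.d.\ $h$, so every intermediate inequality is in fact an equality; in particular the step coming from Theorem~\ref{thm:main} gives $\int h\,(g_1^**F)=\int h\,(M_1*F)$ for all s.d.\ $h$, where $F:=g_2^**\cdots*g_n^*$. If $n=1$ this already reads $\int h\,g_1^*=\int h\,M_1$ for all such $h$, forcing $g_1^*=M_1$ a.e. If $n\ge 2$, then $F$ is symmetrically decreasing, not constant a.e.\ (since $F\ge 0$ and $\int F=1$), and $\|F*g_1^*\|_1\le\|F\|_1\|g_1^*\|_1=1<\infty$, so $F*g_1^*\in L_1$; the equality part of Theorem~\ref{thm:main} then yields $g_1^*=M_1$, a contradiction. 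Hence $g_1^*=\cdots=g_n^*=M_1$ a.e.

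I expect the delicate point to be this equality analysis: one must set up the inequality chain so that equality in \eqref{eq:cor} is transmitted to a genuine equality in a single application of Theorem~\ref{thm:main}, choose the convolution $F$ of the remaining factors as that theorem's function $f$, and verify the mild side conditions — $F$ non-constant and $F*g_1^*\in L_1$ — under which Theorem~\ref{thm:main} is rigid, while handling the degenerate case $n=1$ by hand.
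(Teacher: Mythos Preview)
Your proposal is correct and follows essentially the same approach as the paper: apply the Riesz-type rearrangement inequality \eqref{eq:riesz} to reduce to the symmetrically decreasing case, then iterate Theorem~\ref{thm:main} to replace each $g_j^*$ by $M_1$; for the equality statement, read equality back through the chain and invoke the rigidity part of Theorem~\ref{thm:main}. The paper's version is terser---it simply writes ``apply Theorem~\ref{thm:main} iteratively'' and then cites its equality clause---whereas you spell out the induction and, in the rigidity step, explicitly verify the side hypotheses that $F=g_2^**\cdots*g_n^*$ is not constant a.e.\ and that $F*g_1^*\in L_1$, which the paper leaves implicit.
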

 \begin{proof}	 
	 Applying  \eqref{eq:riesz} to the left hand side of \eqref{eq:cor}, we only have to
	 estimate the expression
	 \[
		\int h^*(x) (g_1^* * \cdots * g_n^*)(x)\dif x.
	 \]
	 If $n=1$, this is trivially estimated by $\int h^*(x) M_1(x)\dif x$. 
	 For $n\geq 2$,
	 we apply Theorem~\ref{thm:main} iteratively and use
	 that the convolution of s.d.
	 functions is again s.d. to deduce
	 \begin{equation}\label{eq:iterative}
		\int h^*(x) (g_1^* * \cdots * g_n^*)(x)\dif x\leq \int h^*(x) (M_1 * \cdots * M_1)(x)\dif x = \int h^*(x)
		M_n(x)\dif x.
	\end{equation}
	If equality holds in \eqref{eq:cor} (for all s.d. functions $h$) then, in particular, equality holds
	in \eqref{eq:iterative}. By the second part of Theorem \ref{thm:main},
this is the case if and only if $g_1^* = \cdots = g_n^* = M_1$ a.e.
 \end{proof}

\section{Main results and consequences} \label{main}

We now use the techniques of Section \ref{prelim} to 
show a similar result for the extreme discrepancy
$\widetilde{D}$ in Theorem \ref{theo2} below as we did for $D$ in Theorem~\ref{theo1}.
Then 
we calculate various minimal values of rearrangement invariant norms of
discrepancy functions in order to demonstrate how to apply the above theorems. 
The list of norms we consider here is by no means
exhaustive, but  Theorems~\ref{theo1} and \ref{theo2}
allow us to treat any desired rearrangement invariant norm.

 \begin{thm} \label{theo2}
	 We have for all $N$-element point sets $\mathcal{P}$,
	 \[
		 \mathbb P(|\widetilde{D}_{\mathcal{P}}| < \alpha) \leq \mathbb
		 P(|\widetilde{D}_{\Gamma_N}| < \alpha),\qquad \alpha>0,
	 \]
	 and equality for all $\alpha>0$ holds if and only if
	 $\mathcal{P}=\Gamma_N^\delta$ for some $\delta\in [0,1/N)$.
 \end{thm}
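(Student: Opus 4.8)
The plan is to reduce the statement about $\widetilde D_{\mathcal P}$ to an application of Theorem~\ref{nconv} with $n=2$, exactly as the identity \eqref{eq:PDtilde} suggests. Recall from Section~\ref{prelim} that if $g$ denotes the density of $D_{\mathcal P}$ and $\widetilde g(x)=g(-x)$, then
\[
	\mathbb P(|\widetilde D_{\mathcal P}| < \alpha) = \int_{-\alpha}^{\alpha} (g * \widetilde g)(t)\dif t = \int_{\mathbb R} \charfun_{(-\alpha,\alpha)}(t)\,(g * \widetilde g)(t)\dif t .
\]
By \eqref{eq:form_of_g}, $g$ (and likewise $\widetilde g$) satisfies $0\le g\le 1$ and $\int g = 1$, so Theorem~\ref{nconv} applies with $g_1 = g$, $g_2 = \widetilde g$, and $h = \charfun_{(-\alpha,\alpha)}$, which is already s.d.\ (hence $h^* = h$). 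This yields
\[
	\mathbb P(|\widetilde D_{\mathcal P}| < \alpha) \le \int_{-\alpha}^{\alpha} M_2(t)\dif t .
\]
It remains to identify the right-hand side as $\mathbb P(|\widetilde D_{\Gamma_N}| < \alpha)$: since the density of $D_{\Gamma_N^\delta}$ is $\charfun_{[-N\delta,\,1-N\delta)}$, the density of $-D_{\Gamma_N^\delta}$ is $\charfun_{(N\delta-1,\,N\delta]}$, and the density of $\widetilde D_{\Gamma_N^\delta} = D_{\Gamma_N^\delta}(t_2) - D_{\Gamma_N^\delta}(t_1)$ is the convolution of these two, which is a translate of $M_1 * M_1 = M_2$; the translation is by $0$ (the convolution of $\charfun_{[a,b)}$ with $\charfun_{-[a,b)}$ is centered), so the density is exactly $M_2$ and $\int_{-\alpha}^\alpha M_2(t)\dif t = \mathbb P(|\widetilde D_{\Gamma_N^\delta}| < \alpha)$ for every $\delta$, in particular for $\delta = 1/(2N)$.

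For the equality characterization, suppose $\mathbb P(|\widetilde D_{\mathcal P}|<\alpha) = \mathbb P(|\widetilde D_{\Gamma_N}|<\alpha)$ for all $\alpha>0$. Then equality holds in \eqref{eq:cor} for all $h$ of the form $\charfun_{(-\alpha,\alpha)}$; since any s.d.\ function is an increasing pointwise-a.e.\ limit of nonnegative combinations of such characteristic functions, equality holds for all s.d.\ $h$ by monotone convergence. The equality part of Theorem~\ref{nconv} then forces $g^* = \widetilde g^{\,*} = M_1$ a.e. Now $g = \frac1N\sum_{n=0}^{N} \charfun_{I_n}$ with $I_n = (n - Nx_n,\, n - Nx_{n-1})$, and $|I_n| = N(x_n - x_{n-1})$, so $\sum_n |I_n| = N(x_N - x_{-1}) = N$; since $0\le g\le 1$ and $g^* = \charfun_{(-1/2,1/2)}$, the superlevel set $\{g>0\}$ has measure $1$, which forces the intervals $I_n$ to be pairwise a.e.\ disjoint and their union to have full measure $1$. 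Disjointness of consecutive intervals $I_n, I_{n+1}$ (which share the endpoint $n - Nx_n$ vs.\ $(n+1) - Nx_n$) is automatic; disjointness of $I_n$ from $I_m$ for $|n-m|\ge 2$ together with the union being an interval (as $g^*$ is an interval indicator and $g$ is obtained from $g^*$ by rearrangement, but $g$ is already supported on an interval by its structure) forces $g = \charfun_{[c, c+1)}$ for some $c$. This means $I_n = [c + (n-1)\cdot\frac{1}{1},\ldots)$—more precisely, the endpoints must line up so that $n - Nx_n = n - 1 - Nx_{n-1} + 1$, giving $x_n - x_{n-1} = 1/N$ for all $n = 1,\ldots,N-1$, i.e.\ $\mathcal P = \Gamma_N^\delta$ with $\delta = x_0 \in [0,1/N)$. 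Conversely, every such $\Gamma_N^\delta$ has $D$-density a translate of $\charfun_{(-1/2,1/2)}$, hence $\widetilde D$-density exactly $M_2$, giving equality. Thus equality holds for all $\alpha$ iff $\mathcal P = \Gamma_N^\delta$ for some $\delta \in [0,1/N)$.

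The main obstacle is the equality analysis: Theorem~\ref{nconv} only returns information about the rearrangements $g^*$ and $\widetilde g^{\,*}$, not about $g$ itself, so one must separately argue that the structural form \eqref{eq:form_of_g} of $g$, combined with $g^* = M_1$, pins $g$ down to a translated interval indicator and hence pins $\mathcal P$ down to a translated grid. The delicate point is ruling out the possibility that the intervals $I_n$ rearrange to $(-1/2,1/2)$ without themselves forming a contiguous block; this uses that the $I_n$ are listed in the natural left-to-right order dictated by the increasing arrangement of $\mathcal P$ (consecutive $I_n$ abut), so the only way their union can be a single interval of length $1$ with the pieces disjoint is for them to tile it in order, forcing equal spacing. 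I would also double-check the harmless subtlety that passing from "equality for all $\alpha$" to "equality for all s.d.\ $h$" is legitimate — this is exactly the monotone-convergence approximation already used twice in the proof of Theorem~\ref{thm:main}, so it presents no real difficulty.
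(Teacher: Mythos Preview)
Your inequality argument is exactly the paper's: rewrite $\mathbb P(|\widetilde D_{\mathcal P}|<\alpha)$ via \eqref{eq:PDtilde} and apply Theorem~\ref{nconv} with $n=2$. Your remark that one must pass from ``equality for all $\alpha$'' to ``equality for all s.d.\ $h$'' before invoking the equality clause of Theorem~\ref{nconv} is a point the paper skips, and your monotone-convergence justification is fine.

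The equality analysis, however, contains a genuine error. From $g^*=M_1$ you correctly deduce that $|\{g>0\}|=1$, but you then conclude that the intervals $I_n$ are \emph{pairwise a.e.\ disjoint}. This is backwards: since $\sum_{n=0}^N |I_n|=N$ while $|\bigcup_n I_n|=|\{g>0\}|=1$, the intervals must overlap with average multiplicity $N$, not be disjoint. Indeed, $g^*=M_1$ means $g$ is an indicator function, so $g=\charfun_E$ with $|E|=1$; for $g=\tfrac1N\sum_n \charfun_{I_n}$ to equal $\charfun_E$, every point of $E$ must lie in exactly $N$ of the $N+1$ intervals. Since $I_0\cap I_N=\emptyset$, each point of $E$ must lie in \emph{all} of $I_1,\ldots,I_{N-1}$ and in exactly one of $I_0,I_N$; combined with $I_n\subseteq E$, this forces $I_1=\cdots=I_{N-1}=I_0\cup I_N=E$ (up to null sets), which is precisely the paper's conclusion $g=\charfun_{I_1}$. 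Equating the endpoints of $I_1,\ldots,I_{N-1}$ then gives $x_n-x_{n-1}=1/N$ for all $n$, i.e.\ $\mathcal P=\Gamma_N^\delta$.

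Your subsequent computation also slips: ``$n-Nx_n=n-1-Nx_{n-1}+1$'' simplifies to $x_n=x_{n-1}$, not to $x_n-x_{n-1}=1/N$. And the parenthetical claiming the union of the $I_n$ is an interval because ``$g$ is obtained from $g^*$ by rearrangement'' does not follow; rearrangement gives no information about the shape of the support. The correct route is the coincidence argument above.
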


\begin{proof}
	By \eqref{eq:PDtilde}, we can write
   \[
\mathbb P( |\widetilde{D}| < \alpha)= \int_{-\alpha}^\alpha (g * \widetilde{g})(t)\dif t
 \]
 with $g$ being (in particular) a function with $0\leq g\leq 1$ and $\int g=1$.
 Now the claim follows from Theorem~\ref{nconv} for $n=2$ with $h=\charfun_{(-\alpha,\alpha)}$, $g_1=g$ and 
 $g_2=\widetilde{g}$, which yields
  $$ \mathbb P(|\widetilde{D}_{\mathcal{P}}| < \alpha)\leq \int_{-\alpha}^{\alpha} M_2(t)\rd t=\mathbb
		 P(|\widetilde{D}_{\Gamma_N}| < \alpha),\qquad \alpha>0. $$
	Equality (for all $\alpha>0$) in Theorem~\ref{nconv} holds only for
	$g^*=\widetilde{g}^*=M_1$.
	Recall that $g$ is of the form \eqref{eq:form_of_g}, i.e.,
	\[
		g = \frac{1}{N} \sum_{n=0}^{N} \charfun_{I_n}
	\]
	with $I_n = (n-Nx_n, n-Nx_{n-1})$ for $n=0,\ldots,N$ and $x_{-1}=0,
	x_N=1$. Therefore, the condition $g^* =M_1$ implies $I_1 = \cdots =
	I_{N-1} = I_0\cup I_{N}$, which gives
	$g=\charfun_{I_1}$. But this is only the case for the density of $D_{\Gamma_N^{\delta}}$
	for any $\delta\in [0,1/N)$.
\end{proof}

Let $\psi: [0,\infty)\to [0,\infty)$ be an absolutely continuous, strictly increasing function with $\psi(0)=0$. Then, $\psi$ is differentiable
	a.e. and $\psi(a) = \int_0^a \psi'(s)\dif s$ for all $a\geq 0$.
	Consider a function $f: A\to\mathbb{R}$, where $A\subseteq \mathbb{R}^d$
	with $|A|=1$. We define
	  $$ \|f\|_{\psi}:=\inf\left\{K>0: \int_A \psi\left(\frac{|f(t)|}{K}\right)\rd t\leq 1\right\} $$	
	with the usual convention $\inf\emptyset=\infty$. Note that $\|f\|_{\psi}$ matches the $L_p$ (quasi-)norm for $p\in (0,\infty)$ by choosing
	for $\psi$ the particular function $\psi_p: [0,\infty)\to [0,\infty), s\mapsto s^p$. For convex functions $\psi$ in general, $\|f\|_{\psi}$
	yields the Orlicz norm. However, the results in this paragraph hold for all functions $\psi$ with the less restrictive properties as stated above.
	Using Fubini's theorem, we perform the following short and well-known
	trick introducing the distribution function of $f$ to obtain
	\begin{align} \label{distorl}
	   \int_A \psi\left(\frac{|f(t)|}{K}\right)\dif t=& \int_A
	   \int_0^{|f(t)|/K}\psi'(\alpha)\dif \alpha \dif t 
		= \int_0^{\infty} \psi'(\alpha) \mathbb P(|f|\geq K\alpha)\dif\alpha.
	\end{align}
It is easy to see that for all $\alpha\geq 0$ we have
$\PP(|D_{\Gamma_N}| \geq \alpha)=\max\{0,1-2\alpha\}$ and $\PP(|\widetilde{D}_{\Gamma_N}|
\geq \alpha)=(1-\min\{\alpha,1\})^2$.
Define the functions $\Psi,T:[0,\infty)\to [0,\infty)$ such that
	$\Psi(0)=T(0)=0$ and 
$\Psi'\equiv \psi$, $T'\equiv \Psi$.
		Inserting $D$ and $\widetilde{D}$ instead of $f$ in \eqref{distorl}, 
Theorem~\ref{theo1} and Theorem~\ref{theo2} yield
	\begin{align*}
	   \int_0^1 \psi\left(\frac{|D_{\mathcal{P}}(t)|}{K}\right) \rd t \geq
	   \int_0^{1/(2K)} \psi'(\alpha) (1-2K\alpha)\dif\alpha=2K\, \Psi\left(\frac{1}{2K}\right)
	\end{align*}
	and
		\begin{align*}
	   \int_0^1 \int_{0}^1
	   \psi\left(\frac{|\widetilde{D}_{\mathcal{P}}(t_1,t_2)|}{K}\right) \rd
	   t_2 \rd t_1\geq  \int_0^{1/K} \psi'(\alpha)
	   (1-K\alpha)^2\dif\alpha=2K^2\, T\left(\frac{1}{K}\right)
	\end{align*}
	for every $N$-element point set $\cP$ in the unit interval, respectively. As a result, we obtain the following corollary.
	
	\begin{cor} \label{coro1}
	      Let $\psi: [0,\infty)\to [0,\infty)$ be an absolutely continuous, 
		      strictly increasing function with $\psi(s)=0$ and $\Psi,T$
		      as above.
		       Moreover, let $N$ be a non-negative integer.
	      
	      Then we have 
	\begin{equation} \label{inf1} \inf_{\# \mathcal{P}=N}\|D_{\mathcal{P}}\|_{\psi}=\inf\left\{K>0: 2K\, \Psi\left(\frac{1}{2K}\right)\leq 1\right\} \end{equation}
	and
	\begin{equation} \label{inf2} \inf_{\#
			\mathcal{P}=N}\|\widetilde{D}_{\mathcal{P}}\|_{\psi}=\inf\left\{K>0:
				2K^2\, T\left(\frac{1}{K}\right)\leq 1\right\}, \end{equation} 
	 where the infimum is extended over all $N$-element point sets in $[0,1]$.
	\end{cor}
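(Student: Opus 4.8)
The plan is to derive both identities from the two pointwise-in-$K$ estimates established just before the corollary, together with the observation that the centered grid turns those estimates into equalities. Fix $N$ and, for an $N$-element point set $\cP$ and $K>0$, abbreviate $F_{\cP}(K):=\int_0^1\psi(|D_{\cP}(t)|/K)\dif t$, so that by definition $\|D_{\cP}\|_\psi=\inf\{K>0:F_{\cP}(K)\le 1\}$. The displayed inequality preceding the corollary reads $F_{\cP}(K)\ge 2K\,\Psi(1/(2K))$ for every $K>0$; it comes from the identity \eqref{distorl}, the fact that $\psi'\ge 0$ a.e. (as $\psi$ is increasing), and Theorem~\ref{theo1}, which via $\PP(|D_{\cP}|\ge\beta)=1-\PP(|D_{\cP}|<\beta)\ge 1-\PP(|D_{\Gamma_N}|<\beta)=\PP(|D_{\Gamma_N}|\ge\beta)$ lets one compare the distribution integrals. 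Consequently the sublevel set $\{K>0:F_{\cP}(K)\le 1\}$ is contained in $\{K>0:2K\,\Psi(1/(2K))\le 1\}$, and passing to infima yields $\|D_{\cP}\|_\psi\ge\inf\{K>0:2K\,\Psi(1/(2K))\le 1\}$; since $\cP$ was arbitrary this is the ``$\ge$'' part of \eqref{inf1}.

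For the reverse inequality I would specialize to $\cP=\Gamma_N$. The point is that $\PP(|D_{\Gamma_N}|\ge\alpha)=\max\{0,1-2\alpha\}$ is an \emph{exact} identity, so inserting it into \eqref{distorl} and performing the same integration by parts (valid by absolute continuity of $\psi$ and $\Psi$, using $\psi(0)=0$, $\Psi(0)=0$, $\Psi'\equiv\psi$) that produced the lower bound gives $F_{\Gamma_N}(K)=2K\,\Psi(1/(2K))$ for all $K>0$, with equality throughout. Hence $\|D_{\Gamma_N}\|_\psi=\inf\{K>0:2K\,\Psi(1/(2K))\le 1\}$, and since this is an admissible $\cP$ we get ``$\le$'', proving \eqref{inf1}. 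The proof of \eqref{inf2} is word for word the same, replacing $D$ by $\widetilde D$, Theorem~\ref{theo1} by Theorem~\ref{theo2}, the exact identity by $\PP(|\widetilde D_{\Gamma_N}|\ge\alpha)=(1-\min\{\alpha,1\})^2$, the function $\Psi$ by $T$ (recall $T'\equiv\Psi$), and using any translate $\Gamma_N^\delta$ as the extremizer, which by Theorem~\ref{theo2} has the same discrepancy distribution as $\Gamma_N$.

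I do not expect a genuine obstacle here; the argument is essentially bookkeeping around the definition of $\|\cdot\|_\psi$. The only things worth verifying explicitly are: that the set $\{K>0:2K\,\Psi(1/(2K))\le 1\}$ is nonempty, which holds because $2K\,\Psi(1/(2K))=2K\int_0^{1/(2K)}\psi(s)\dif s\le\psi(1/(2K))\to\psi(0)=0$ as $K\to\infty$ (and likewise $2K^2\,T(1/K)\le\Psi(1/K)\to 0$), so both sides of \eqref{inf1} and \eqref{inf2} are finite; and that the comparison of the two ``$\inf$'' expressions is obtained purely from the inclusion of sublevel sets, so one need not worry about whether any infimum is attained. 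The mildly delicate conceptual step—already done in the paragraph preceding the corollary—is the transition from the distributional inequalities of Theorems~\ref{theo1} and \ref{theo2} to the integral inequalities via the layer-cake identity \eqref{distorl}; everything in the corollary itself is then immediate.
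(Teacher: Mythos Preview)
Your proposal is correct and follows essentially the same approach as the paper: the corollary is stated there as a direct consequence of the two displayed inequalities derived via \eqref{distorl} and Theorems~\ref{theo1} and~\ref{theo2}, and you have simply spelled out the sublevel-set inclusion and the equality case $\cP=\Gamma_N$ that make the passage to the infimum rigorous. Your additional remark that the right-hand infimum is finite (via $2K\,\Psi(1/(2K))\le\psi(1/(2K))\to 0$) is a helpful detail the paper leaves implicit.
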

	
	\begin{rem}
	 The special choice $\psi=\psi_p: s\mapsto s^p$ in~\eqref{inf1} and~\eqref{inf2} yields
				\begin{equation*} \inf_{\#
						\mathcal{P}=N}\|D_{\mathcal{P}}\|_{p}^p=
						\frac{1}{2^p(p+1)}\qquad
						\text{and}\qquad \inf_{\#
							\mathcal{P}=N}\|\widetilde{D}_{\mathcal{P}}\|_{p}^p=
							\frac{2}{(p+1)(p+2)}\end{equation*}
				for all $p\in (0,\infty)$. 
				This formula for $\widetilde D_{\mathcal P}$ with 
				$p=2$ and \eqref{eq:Dtilde2} for $\mathcal
				P=\Gamma_N$ are different by a
				factor of $2$, because we consider
				$\widetilde{D}_{\mathcal P}$ to be defined on $[0,1]^2$,
				whereas in formula \eqref{eq:Dtilde2} it is 
				integrated over the set $\{ (t_1, t_2)\in [0,1]^2
				: 0\leq t_1\leq t_2\leq 1\}$.
	\end{rem}

	Finally, we consider Lorentz norms. Let $f$ be a Lebesgue measurable
	function and $0<p,q<\infty$. We define the Lorentz norm
	$$ \|f\|_{p,q}:=p^{1/q}\left(\int_0^{\infty} \alpha^{q-1}\mathbb
	P(|f|\geq \alpha)^{p/q}\dif \alpha\right)^{1/q}. $$
	Theorem~\ref{theo1} and Theorem~\ref{theo2} then yield the following
	lower bounds on Lorentz norms.
	
		\begin{cor} \label{coro1:lorentz}
	      Let $B(x,y):=\int_0^1 t^{x-1}(1-t)^{y-1}\dif t$ for $x,y>0$ be the
	      Beta function, $0<p,q<\infty$ and $N$ a non-negative integer.

	     Then we have
	\begin{equation} \label{inf3} \inf_{\#
			\mathcal{P}=N}\|D_{\mathcal{P}}\|_{p,q}^q=\frac{p}{2^q}B(q,1+q/p) 
	\end{equation}
	and
	\begin{equation} \label{inf4} \inf_{\#
			\mathcal{P}=N}\|\widetilde{D}_{\mathcal{P}}\|_{p,q}^q=p\,
			B(q,1+2q/p). \end{equation} 
	\end{cor}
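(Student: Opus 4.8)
The plan is to derive Corollary~\ref{coro1:lorentz} in exactly the same way Corollary~\ref{coro1} was obtained from Theorems~\ref{theo1} and~\ref{theo2}, namely by inserting the explicit tail distributions of the discrepancy functions of $\Gamma_N$ into the definition of the Lorentz norm and checking that the regular grid (resp.\ a translate of it) is still a minimizer. First I would recall from the text that for all $\alpha\ge 0$
\[
\PP(|D_{\Gamma_N}|\ge\alpha)=\max\{0,1-2\alpha\},\qquad \PP(|\widetilde D_{\Gamma_N}|\ge\alpha)=(1-\min\{\alpha,1\})^2 .
\]
By Theorem~\ref{theo1}, $\PP(|D_{\cP}|\ge\alpha)\ge\PP(|D_{\Gamma_N}|\ge\alpha)$ for every $N$-element set $\cP$ and every $\alpha>0$; since in the definition of $\|f\|_{p,q}^q$ the distribution function enters through the increasing function $x\mapsto x^{p/q}$ and is integrated against the non-negative weight $\alpha^{q-1}$, this pointwise inequality integrates to $\|D_{\cP}\|_{p,q}\ge\|D_{\Gamma_N}\|_{p,q}$, with equality achieved by $\Gamma_N$ itself. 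The analogous argument using Theorem~\ref{theo2} gives $\|\widetilde D_{\cP}\|_{p,q}\ge\|\widetilde D_{\Gamma_N}\|_{p,q}$, with equality for every translate $\Gamma_N^\delta$. Hence both infima are attained and equal the Lorentz norm of the corresponding grid discrepancy function.

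Next I would evaluate these two grid norms explicitly. For the one-parameter case,
\[
\|D_{\Gamma_N}\|_{p,q}^q=p\int_0^{\infty}\alpha^{q-1}\bigl(\max\{0,1-2\alpha\}\bigr)^{p/q}\dif\alpha
=p\int_0^{1/2}\alpha^{q-1}(1-2\alpha)^{p/q}\dif\alpha,
\]
and the substitution $\alpha=t/2$ turns this into $\tfrac{p}{2^q}\int_0^1 t^{q-1}(1-t)^{p/q}\dif t=\tfrac{p}{2^q}B(q,1+p/q)$, which is~\eqref{inf3}.

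Wait — comparing with the claimed right-hand side $\tfrac{p}{2^q}B(q,1+q/p)$, I should double-check the exponent on $(1-t)$: the power appearing is $p/q$, so the second Beta-argument is $1+p/q$, not $1+q/p$; I would re-examine whether the paper intends $\PP(\cdot)^{p/q}$ or $\PP(\cdot)^{q/p}$ in the definition of $\|f\|_{p,q}$, and align the computation accordingly, since this is precisely the point where a routine but easy-to-slip index could go wrong. Modulo that bookkeeping, the two-parameter case is identical:
\[
\|\widetilde D_{\Gamma_N}\|_{p,q}^q=p\int_0^{1}\alpha^{q-1}\bigl((1-\alpha)^2\bigr)^{p/q}\dif\alpha
=p\int_0^{1}\alpha^{q-1}(1-\alpha)^{2p/q}\dif\alpha=p\,B(q,1+2p/q),
\]
giving~\eqref{inf4} (again up to the $p/q$ vs.\ $q/p$ convention in the exponent). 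The main obstacle here is not conceptual — the reduction to the grid via Theorems~\ref{theo1} and~\ref{theo2} is immediate because the Lorentz functional is monotone in the tail distribution — but purely notational: getting the Beta-function arguments to match the stated formulas requires fixing the exact meaning of the exponent $p/q$ in the Lorentz norm and carrying it correctly through the substitution, and one should also note in passing that (unlike in Corollary~\ref{coro1}) no convexity or further hypothesis on the parameters is needed beyond $0<p,q<\infty$ for the integrals to converge.
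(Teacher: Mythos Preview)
Your approach is correct and matches the paper's (which gives no explicit proof beyond citing Theorems~\ref{theo1} and~\ref{theo2}); the reduction to the grid via monotonicity of the Lorentz functional in the tail distribution, followed by the Beta-integral computation, is exactly the intended argument. Your suspicion about the exponent is well-founded: the paper's definition of $\|f\|_{p,q}$ with $\PP(|f|\geq\alpha)^{p/q}$ is a typo for the standard $\PP(|f|\geq\alpha)^{q/p}$, and with that correction your substitutions give the stated Beta arguments $1+q/p$ and $1+2q/p$ on the nose.
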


	\begin{rem}
		Observe that all norms we considered in this section are defined
		by integrals, where the integrands include the distribution of
		the discrepancy functions. Let $\cP$ be an $N$-element point set
		with $\cP\neq \Gamma_N$. Then, since the function
			$\alpha\mapsto \mathbb P(|D_{\mathcal P}|\geq \alpha)$
		is continuous, Theorem~\ref{theo1} yields that there is an
		interval
		$I$  of positive length such that $\mathbb P(|D_{\mathcal{P}}| \geq \alpha) > \mathbb
		P(|D_{\Gamma_N}|\geq \alpha)$ for all $\alpha \in I$. Therefore, equality in
	~\eqref{inf1} and~\eqref{inf3} holds only for $\mathcal{P}=\Gamma_N$. 
	With an analogue argumentation and referring to Theorem~\ref{theo2}
	instead of Theorem \ref{theo1}, we find that equality in
	~\eqref{inf2} and~\eqref{inf4} holds only for $\mathcal{P}=\Gamma_N^{\delta}$ for any $\delta\in[0,1/N)$.
	\end{rem}

	\section{Outlook}
	Considering the assertions of  Theorems \ref{theo1} and \ref{theo2},
	one might wonder whether the fact that for any number of points $N$ there exists a point set
	$\mathcal P'$ with $|\mathcal P'|=N$ so that for all points sets
	$\mathcal P$ with $|\mathcal P|=N$,
	 \[
		 \mathbb P(|D_{\mathcal{P}}| < \alpha) \leq \mathbb
		 P(|D_{\mathcal P'}| < \alpha),\qquad \alpha>0,
	 \]
	 extends to higher dimensions $d\geq 2$. Numerical calculations suggest
	 that such a general result is not true for $d= 2$ and $N\geq 4$.
	 Moreover, for $d\geq 3$ and even $N=1$ and $N=2$ such a general result  
	 is not true. This was proved for $N=1$ in \cite{Pillards2006} and for
	 $N=2$ in \cite{LarcherPillichshammer2007},
	 by showing 
	 that the unique $N$-element point sets that minimize $L_2$
	 discrepancy and
	 star discrepancy are different from each other.

	A different problem as posed above would be the following: for $d\geq 2$ and any
	non-negative integer $N$ find  
	 a function $f_N$  with the properties:
	\begin{enumerate} 
		\item It satisfies the inequality  
	 \[
		 \mathbb P(|D_{\mathcal{P}}| < \alpha) \leq 
		 f_N(\alpha),\qquad \alpha>0,
	 \]
for all point sets $\mathcal P$ with $|\mathcal P|=N$.

 \item It allows us to give sharp lower bounds for the
	 (quasi-)norm
	 of the discrepancy function in certain function spaces, for instance
	 $L_1$ or $L_p$, $p<1$.
	 \end{enumerate}
	 The problem of finding such functions $f_N$, at least in special cases,
	 will be investigated in the future.

	\subsection*{Acknowledgments}  R. Kritzinger is supported by the Austrian Science Fund (FWF),
Project F5509-N26, and M. Passenbrunner is supported by the Austrian Science Fund (FWF),
Project F5513-N26. Both projects are a part of the Special Research Program ``Quasi-Monte Carlo Methods: 
Theory and Applications''.

\bibliographystyle{plain}
\bibliography{distr}

\end{document}